\newtheorem{thm}{Theorem}
\newtheorem{cor}{Corollary}
\newtheorem{rmk}{Remark}
\newtheorem{lem}{Lemma}
\newcommand{\C}{\mathbb C}
\newcommand{\Q}{\mathbb Q}
\newcommand{\N}{\mathbb N}
\newcommand{\R}{\mathbb R}
\begin{document}

\title{Sign changes of a product of Dirichlet character and Fourier coefficients of half integral weight modular forms}


\titlerunning{Sign changes of a product}        

\author{Soufiane Mezroui}


\institute{Soufiane Mezroui \at
           LabTIC,\\
           SIC Department,\\
           ENSAT,\\
           Abdelmalek Essaadi University,\\
           Tangier, Morocco\\
           \email{mezroui.soufiane@yahoo.fr}
           }

\maketitle

\begin{abstract}
Let $f\in S_{k+1/2}(N,\chi)$ be a Hecke eigenform of half integral weight $k+1/2\,(k\geq 2)$ and the real nebentypus $\chi=\pm 1$ where the Fourier coefficients $a(n)$ are reals. We prove that the sequence $\{\chi(p^{\nu})a(tp^{2\nu})\}_{\nu\in\N}$ has infinitely many sign changes for almost all primes $p$ where $t$ is a squarefree integer such that $a(t)\neq 0$. The same result holds for the sequences of Fourier coefficients $\{a(tp^{2(2\nu+1)})\}_{\nu\in\N}$ and $\{a(tp^{4\nu})\}_{\nu\in\N}$.
\keywords{Sign change \and Fourier coefficients\and Half-integral weight\and Dirichlet series}
\subclass{11F03\and 11F30\and 11F37}
\end{abstract}

\section{Introduction and statement of results}

Let $k,N\in\N$ be integers, we denote by $S_k(N,\chi)$ the space of cusp forms of weight $k$ and level $N$ with Dirichlet character $\chi\pmod N$. 

When $4|N$, we denote by $S_{k+1/2}(N,\chi)$ the space of cusp forms of half-integral weight $k+1/2$ and level $N$ with character $\chi\pmod N$. Let $S^{*}_{3/2}(N,\chi)$ be the orthogonal complement of the subspace of $S_{3/2}(N,\chi)$ generated by single-variable theta series. For $k\ge2$ we put $S^{*}_{k+1/2}(N,\chi)=S_{k+1/2}(N,\chi)$. Recall that the Shimura lift maps $S^{*}_{k+1/2}(N,\chi)$ to the space $S_{2k}(N/2,\chi^{2})$ of cusp forms of integral weight $2k$ and level $N/2$ with character $\chi^2$. 

There have been since same years many papers studying the sign changes of modular forms using Landau's theorem (see \cite{bruin,kohn1,kumar,meher,mezroui,kohn2}). The main idea is to assume that the particular sequence of Fourier coefficients of modular forms does not have infinitely many sign changes and then the contradiction is established by Landau's theorem applied to Dirichlet series of this sequence of Fourier coefficients. For example in \cite{bruin}, Bruinier and Kohnen showed particularly that if $f$ is an eigenform with half integral weight and real Fourier coefficients $a(n)$, then for all but finitely many primes $p$, the sequence $(a(t p^{2\nu}))_{\nu\in\N}$ has infinitely many sign changes with $t$ is a square free natural number such that $a(t)\neq 0$. In this work we study the problem of sign changes of those sequences when the Fourier coefficients are complex numbers and our first main theorem is the following.

\begin{thm}\thlabel{thm1}
Let $f\in S^{*}_{k+1/2}(N,\chi)$ be a Hecke eigenform of all Hecke operators $T(p^{2})$. Let
$$
f(z)=\sum_{n\ge1}a(n)e(nz),
$$
be the Fourier expansion of $f$ at $\infty$. Let $t$ be a square free natural number such that $a(t)\neq 0$. Then for all but finitely many primes $p$ with $(p,N)=1$, the sequence $\{\frac{a(tp^{2 \nu})}{\chi(p^{\nu})}\}_{\nu\in\N}$ has infinitely many sign changes.
\end{thm}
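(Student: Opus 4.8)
\emph{Plan.} The scheme is the standard one --- assume the sequence is eventually of one sign and derive a contradiction from Landau's theorem --- but carried out for a rescaled sequence that is genuinely real even though $\chi$ may be complex. First I would set aside the finitely many primes dividing $tN$ and fix $p$ with $(p,tN)=1$ (so $p$ is odd). Let $\lambda_p$ be the eigenvalue of $T(p^{2})$ on $f$. From the formula for the action of $T(p^{2})$ on Fourier coefficients one gets
\[
a(tp^{2})=\bigl(\lambda_p-\varepsilon_p\chi(p)p^{k-1}\bigr)a(t),\qquad
a(tp^{2(\nu+1)})=\lambda_p\,a(tp^{2\nu})-\chi(p)^{2}p^{2k-1}a(tp^{2(\nu-1)})\quad(\nu\ge1),
\]
with $\varepsilon_p=\bigl(\tfrac{(-1)^{k}t}{p}\bigr)\in\{\pm1\}$; the recursion holds because the middle term of the Hecke relation vanishes once $p\mid tp^{2\nu}$. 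Assembling the generating function of the $a(tp^{2\nu})$ and then substituting $X\mapsto X/\chi(p)$ yields
\[
\sum_{\nu\ge0}\frac{a(tp^{2\nu})}{\chi(p^{\nu})}\,X^{\nu}
= a(t)\,\frac{1-\varepsilon_p p^{k-1}X}{1-\mu_pX+p^{2k-1}X^{2}},\qquad \mu_p:=\frac{\lambda_p}{\chi(p)}.
\]
Here $\mu_p$ is \emph{real} with $|\mu_p|\le2p^{k-1/2}$: by Shimura's theory the Shimura lift $F\in S_{2k}(N/2,\chi^{2})$ of $f$ is a Hecke eigenform whose $T_p$-eigenvalue is $\lambda_p$, so Deligne's bound gives $|\lambda_p|\le2p^{k-1/2}$, and if $\alpha_p,\beta_p$ are its Satake parameters at $p$ (so $\alpha_p\beta_p=\chi^{2}(p)p^{2k-1}$, $|\alpha_p|=|\beta_p|=p^{k-1/2}$) then $\beta_p=\chi(p)^{2}\overline{\alpha_p}$, whence $\mu_p=\alpha_p/\chi(p)+\overline{\alpha_p/\chi(p)}\in\R$. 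Thus $c_\nu:=a(tp^{2\nu})/\bigl(a(t)\chi(p^{\nu})\bigr)$ are real, $c_0=1$, and a sign change of the sequence in the statement is precisely a sign change of $(c_\nu)_{\nu\ge0}$.

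Next the analytic step: setting $X=p^{-s}$, the identity reads $D_p(s):=\sum_{\nu\ge0}c_\nu p^{-\nu s}=h(s)$ with $h(s)=\frac{1-\varepsilon_p p^{k-1-s}}{1-\mu_p p^{-s}+p^{2k-1-2s}}$, the series converging for $\Re s>k-1/2$ by Deligne. Since $|\mu_p|\le2p^{k-1/2}$, both roots of $1-\mu_p w+p^{2k-1}w^{2}$ have modulus $p^{-(k-1/2)}$, so every pole of $h$ lies on the line $\Re s=k-1/2$, where the numerator does not vanish; hence $h$ is holomorphic off that line. Assume, for contradiction, that $(c_\nu)$ has only finitely many sign changes; since the right-hand side is not a polynomial, $(c_\nu)$ is not eventually zero, hence eventually of a fixed sign $\eta\in\{\pm1\}$. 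Then $\eta D_p$ has eventually nonnegative coefficients, so by Landau's theorem its abscissa of convergence is a singular point of $\eta h$, hence of $h$; being real and $\le k-1/2$, it equals $k-1/2$, so $h$ has a pole at $s=k-1/2$. Evaluating the denominator there gives $2-\mu_p p^{-(k-1/2)}=0$, i.e.\ $\mu_p=2p^{k-1/2}$. (Conversely, when $\mu_p=2p^{k-1/2}$ one finds $c_\nu=p^{(k-1/2)(\nu-1)}\bigl(\nu(p^{k-1/2}-\varepsilon_p p^{k-1})+p^{k-1/2}\bigr)>0$ for every $\nu$, so such primes really have no sign changes, and the eventual sign $\eta$ is necessarily $+1$.)

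It remains to show $\mu_p=2p^{k-1/2}$ for only finitely many $p$; the exceptional set of the theorem is then contained in $\{p\mid tN\}\cup\{p:\mu_p=2p^{k-1/2}\}$, and we are done. But $\mu_p=2p^{k-1/2}$ means $\lambda_p=2p^{k-1}\sqrt{p}\,\chi(p)$, so $\sqrt{p}=\lambda_p/(2p^{k-1}\chi(p))$ lies in a number field independent of $p$: $\lambda_p$ is a Hecke eigenvalue of the fixed cusp form $F$, hence lies in the Hecke field $K_F$ (which also contains $\chi^{2}(p)$, the nebentypus value of $F$), while $\chi(p)$, a root of unity of order dividing $\mathrm{ord}(\chi)$, lies in a fixed cyclotomic field. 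Since a fixed number field contains $\sqrt{p}$ for only finitely many primes $p$ (as $\Q(\sqrt{p_1},\dots,\sqrt{p_r})$ has degree $2^{r}$ over $\Q$), only finitely many $p$ can satisfy $\mu_p=2p^{k-1/2}$; this is the desired contradiction.

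The Hecke recursion, the generating function, and the Landau step are routine. The two points demanding care are the reality of $\mu_p$ (genuinely using Deligne's bound together with the precise shape of the Satake parameters of a form with nontrivial nebentypus) and the finiteness of $\{p:\mu_p=2p^{k-1/2}\}$; I expect the latter --- pinning down exactly the fixed number field into which $\sqrt p$ would be forced --- to be the main obstacle.
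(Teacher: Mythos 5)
Your proposal is correct and follows essentially the same route as the paper: the Shimura--Hecke recursion gives the rational generating function $a(t)\frac{1-\chi_{1}(p)p^{k-1-s}}{1-(\lambda_p/\chi(p))p^{-s}+p^{2k-1-2s}}$, Landau's theorem plus Deligne's bound forces $\lambda_p=\pm 2p^{k-1/2}\chi(p)$ for any exceptional prime, and then $\sqrt{p}$ lands in the fixed number field generated by the Hecke eigenvalues and the values of $\chi$, which can happen only finitely often. The only differences are that you fill in details the paper asserts more tersely (the reality of $\lambda_p/\chi(p)$ via the Satake parameters of the Shimura lift, and the non-vanishing of the numerator at the poles), which strengthens rather than changes the argument.
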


When the character $\chi=\pm 1$ is real, we obtain the following result.

\begin{cor}
Let $f\in S^{*}_{k+1/2}(N,\chi)$ be a Hecke eigenform of all Hecke operators $T(p^{2})$ with $\chi=\pm 1$. Let
$$
f(z)=\sum_{n\ge1}a(n)e(nz),
$$
be the Fourier expansion of $f$ at $\infty$. Let $t$ be a square free natural number such that $a(t)\neq 0$. Then for all but finitely many primes $p$ with $(p,N)=1$, the sequence $\{\chi(p^{\nu})a(tp^{2\nu})\}_{\nu\in\N}$ has infinitely many sign changes.
\end{cor}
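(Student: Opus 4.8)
The plan is to deduce the Corollary directly from \thref{thm1}, the only extra ingredient being the elementary fact that a real Dirichlet character takes values in $\{0,\pm1\}$. Fix a prime $p$ with $(p,N)=1$. Since $\chi=\pm1$ is quadratic, $\chi(p)\in\{-1,1\}$, hence $\chi(p^{\nu})=\chi(p)^{\nu}\in\{-1,1\}$ for every $\nu\in\N$; in particular each $\chi(p^{\nu})$ equals its own reciprocal, so
$$
\frac{a(tp^{2\nu})}{\chi(p^{\nu})}=\chi(p^{\nu})\,a(tp^{2\nu})\qquad(\nu\in\N).
$$
Thus the sequence $\{\chi(p^{\nu})a(tp^{2\nu})\}_{\nu\in\N}$ appearing in the Corollary is \emph{term-by-term equal} to the sequence $\{a(tp^{2\nu})/\chi(p^{\nu})\}_{\nu\in\N}$ of \thref{thm1}, so the two have exactly the same sign changes.

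With this observation the proof is immediate. The form $f$ is a Hecke eigenform of all $T(p^{2})$ in $S^{*}_{k+1/2}(N,\chi)$ and $t$ is squarefree with $a(t)\neq0$, which are precisely the hypotheses of \thref{thm1}; moreover, under the standing assumption that the $a(n)$ are real (needed for ``sign'' to be meaningful, and already present in \thref{thm1}), every term $\chi(p^{\nu})a(tp^{2\nu})$ is a well-defined real number. Applying \thref{thm1} produces a finite set $S$ of primes such that for all $p\notin S$ with $(p,N)=1$ the sequence $\{a(tp^{2\nu})/\chi(p^{\nu})\}_{\nu}$ changes sign infinitely often; by the displayed identity the same holds verbatim for $\{\chi(p^{\nu})a(tp^{2\nu})\}_{\nu}$, with the same exceptional set $S$.

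Since the argument is a one-line rewriting, there is essentially no obstacle beyond \thref{thm1} itself; the only points to keep in mind are that the exceptional set is not enlarged (clear, since the two sequences coincide) and that one is genuinely comparing real numbers. Should one wish to avoid assuming reality of the $a(n)$ as a hypothesis, an alternative is to invoke the Shimura--Kohnen identity expressing $\sum_{\nu\ge0}a(tp^{2\nu})X^{\nu}$ as $a(t)$ times a rational function in $X$ whose coefficients are polynomials in $\chi(p)$, $p^{k-1}$ and the $p$-th Hecke eigenvalue of the Shimura lift of $f$; for $\chi=\pm1$ this lift lies in $S_{2k}(N/2,1)$ and hence has real Hecke eigenvalues, forcing $\chi(p^{\nu})a(tp^{2\nu})/a(t)\in\R$ intrinsically, after which \thref{thm1} applies as above.
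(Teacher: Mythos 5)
Your proof is correct and is exactly the deduction the paper intends: since $\chi=\pm1$ and $(p,N)=1$ give $\chi(p^{\nu})\in\{\pm1\}$, the sequence $\{\chi(p^{\nu})a(tp^{2\nu})\}$ coincides term by term with $\{a(tp^{2\nu})/\chi(p^{\nu})\}$, so the Corollary is immediate from \thref{thm1} with the same exceptional set of primes.
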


Notice that when $\chi=1$, this result is the same as \cite[Theorem 2.2]{bruin}. Further, it has been shown in \cite{bruin} that if $\chi=\pm 1$ and the Fourier coefficients $a(n)$ of the Hecke eigenform $f\in S^{*}_{k+1/2}(N,\chi)$ are reals, then the sequence $(a(t p^{2 m}))_{m\in\mathbb{N}}$ has infinitely many sign changes for almost all primes $p$. Our second main theorem shows that the subsequences of $(a(t p^{2 m}))_{m\in\mathbb{N}}$ with odd and even indices has infinitely many sign changes for almost all primes $p$.

\begin{thm}\thlabel{thm2}
Let $f\in S^{*}_{k+1/2}(N,\chi)$ be a Hecke eigenform of half integral weight $k+1/2$ and level $N$ with Dirichlet character $\chi=\pm 1$. Let
$$
f(z)=\sum_{n\ge1}a(n)e(nz),
$$
be the Fourier expansion of $f$ at $\infty$. Let $t$ be a square free natural number. Then for all but finitely many primes $p$ with $(p,N)=1$, the sequence $\{a(tp^{2(2\nu+1)})\}_{\nu\in\N}$ has infinitely many sign changes. The same result holds for the sequence $\{a(tp^{4\nu})\}_{\nu\in\N}$.
\end{thm}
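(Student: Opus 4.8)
The plan is to exploit the multiplicativity of Hecke eigenvalues together with the Shimura lift, reducing the sign-change question for $\{a(tp^{2(2\nu+1)})\}_\nu$ and $\{a(tp^{4\nu})\}_\nu$ to the behaviour of the associated integral weight eigenform. Write $\lambda(p)$ for the $T(p^2)$-eigenvalue of $f$ and recall the standard identity
$$
\sum_{\nu\ge0}\frac{a(tp^{2\nu})}{p^{\nu s}}=a(t)\,\frac{1-\chi(p)^{2}\left(\frac{t}{p}\right)p^{k-1-s}}{1-\lambda(p)p^{-s}+\chi(p)^{2}p^{2k-1-2s}},
$$
valid for $(p,N)=1$. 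Since $\chi=\pm1$ we have $\chi(p)^2=1$, so after the substitution $X=p^{-s}$ the generating Dirichlet series is a rational function whose denominator $1-\lambda(p)X+p^{2k-1}X^{2}$ factors as $(1-\alpha_p X)(1-\beta_p X)$ with $\alpha_p\beta_p=p^{2k-1}$ and $\alpha_p+\beta_p=\lambda(p)$, the $p$-th Hecke eigenvalue of the Shimura lift $F\in S_{2k}(N/2,1)$. By Deligne's bound $|\alpha_p|=|\beta_p|=p^{k-1/2}$, and for almost all $p$ we may write $\alpha_p=p^{k-1/2}e^{i\theta_p}$, $\beta_p=p^{k-1/2}e^{-i\theta_p}$ with $\theta_p\in(0,\pi)$ not a rational multiple of $\pi$ (the finitely many exceptional $p$ where $\lambda(p)=0$ or $\theta_p\in\{0,\pi\}$ or $a(t)\left(\tfrac tp\right)$ vanishes are discarded). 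This yields the closed form
$$
a(tp^{2\nu})=a(t)\,p^{\nu(k-1/2)}\,U_\nu(\theta_p),\qquad U_\nu(\theta)=\frac{\sin\bigl((\nu+1)\theta\bigr)-\left(\tfrac tp\right)\sin(\nu\theta)\,p^{-1/2}}{\sin\theta}
$$
after absorbing the numerator; in any case $a(tp^{2\nu})$ has the same sign as a fixed real multiple of $U_\nu(\theta_p)$.

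Next I would analyse the two subsequences separately. For $\{a(tp^{4\nu})\}_\nu$ we are looking at $U_{2\nu}(\theta_p)$, and for $\{a(tp^{2(2\nu+1)})\}_\nu$ at $U_{2\nu+1}(\theta_p)$. The point is that, up to the small perturbation coming from the $p^{-1/2}$ term in the numerator, $\sin\bigl((2\nu+1)\theta_p\bigr)$ and $\sin\bigl((2\nu+2)\theta_p\bigr)$ are equidistributed on $[-1,1]$ (Weyl, since $\theta_p/\pi$ is irrational), and in particular each takes values in both $(\epsilon,1]$ and $[-1,-\epsilon)$ for a fixed $\epsilon>0$ infinitely often; the perturbation, bounded by $p^{-1/2}/|\sin\theta_p|$, is too small to kill these sign patterns once $p$ is large enough relative to $\theta_p$ — but since $\theta_p$ is fixed for each fixed $p$, this is automatic. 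Hence each of the two subsequences changes sign infinitely often. Alternatively, and perhaps more cleanly in the spirit of the rest of the paper, I would phrase this via Landau's theorem: assume for contradiction that $a(tp^{2(2\nu+1)})\ge0$ for all large $\nu$ (the case of eventually nonpositive terms, and the even-index case, being symmetric). Then the Dirichlet series $\sum_\nu a(tp^{2(2\nu+1)})p^{-(2\nu+1)s}$ — which, by splitting the rational function above into partial fractions, extends to a meromorphic function on $\C$ with poles only at $s$ with $p^{-s}=\pm\alpha_p^{-1}$ or $\pm\beta_p^{-1}$, all on the line $\mathrm{Re}(s)=k-1/2$ — would by Landau have its abscissa of convergence at a real pole, forcing $\alpha_p$ or $\beta_p$ (or its negative) to be a positive real, i.e. $\theta_p\in\{0,\pi\}$, contradicting the choice of $p$. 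This is exactly the mechanism used for \thref{thm1} and for the Bruinier–Kohnen theorem, just applied to the ``odd part'' and ``even part'' of the generating series obtained by $s\mapsto s$ with $p^{-s}$ replaced by $\pm p^{-s}$ (equivalently, take $g(X)=\tfrac12(h(X)+h(-X))$ and $\tfrac1{2X}(h(X)-h(-X))$ for $h(X)=\sum a(tp^{2\nu})X^\nu$).

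Assembling the argument: fix $f$, $t$, and exclude the finite set of primes $p\mid N$, primes with $\lambda(p)=0$, primes with $\theta_p$ a rational multiple of $\pi$, and (for the relevant statement) primes with $a(t)=0$ or — if one keeps the numerator term — with the numerator identically problematic; all of these exclusions are finite by the standard arguments (the $\lambda(p)=0$ exclusion and the ``$\theta_p\notin\pi\Q$'' exclusion are exactly those appearing in \cite{bruin} and in the proof of \thref{thm1}). For each remaining $p$, the rational-function/Landau argument of the previous paragraph gives infinitely many sign changes in $\{a(tp^{4\nu})\}_\nu$ and in $\{a(tp^{2(2\nu+1)})\}_\nu$, which is the claim. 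I expect the main obstacle to be purely bookkeeping: verifying that after extracting the even/odd part of the rational generating function the resulting Dirichlet series still extends meromorphically to all of $\C$ with all poles on the critical line $\mathrm{Re}(s)=k-1/2$ and none on the positive real axis, so that Landau's theorem yields the contradiction; this requires writing the numerator explicitly (including the $1-\left(\tfrac tp\right)p^{k-1-s}$ factor coming from the squarefree $t$) and checking that no unexpected real pole is created when $t$ is such that $\left(\tfrac tp\right)=\pm1$ interacts with $\alpha_p,\beta_p$. Once that is in hand, the rest is a transcription of the pattern already established in \thref{thm1}.
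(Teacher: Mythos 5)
Your ``assembled'' argument has a genuine gap in the exclusion set. You discard the primes $p$ with $\lambda(p)=0$ and, more generally, those with $\theta_p\in\pi\Q$, asserting that these exclusions are finite ``by the standard arguments'' of \cite{bruin} and \thref{thm1}. Those arguments only handle the case $\lambda(p)=\pm 2p^{k-1/2}\chi(p)$ (equivalently $\theta_p\in\{0,\pi\}$), where the relation forces $\sqrt{p}\in\mathbb{K}$ and hence only finitely many such $p$. They say nothing about $\lambda(p)=0$ (i.e.\ $\theta_p=\pi/2$), and that set need not be finite: if the Shimura lift has CM, then $\lambda(p)=0$ for a positive density of primes. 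Since the theorem claims sign changes for \emph{all but finitely many} $p$, the Weyl-equidistribution route, which needs $\theta_p/\pi$ irrational, cannot be repaired by simply ``discarding'' those primes; as written it proves a weaker statement than \thref{thm2}.

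The fix is precisely the route you offer as an alternative, and it is the paper's proof: form the odd and even parts $S_{1}(X)=\tfrac12\bigl(H_{1}(X)-H_{1}(-X)\bigr)$ and $S_{0}(X)=\tfrac12\bigl(H_{1}(X)+H_{1}(-X)\bigr)$, so that the relevant Dirichlet series is a rational function of $X=p^{-s}$ with denominator $\bigl(1-\tfrac{\lambda_p}{\chi(p)}X+p^{2k-1}X^{2}\bigr)\bigl(1+\tfrac{\lambda_p}{\chi(p)}X+p^{2k-1}X^{2}\bigr)$, and apply Landau. Two points you left as ``bookkeeping'' are exactly where the proof lives. First, to kill Landau's first alternative (convergence everywhere) one must check the function genuinely has a pole, i.e.\ that the numerator --- explicitly $X\bigl(\tfrac{a(tp^{2})}{\chi(p)}-a(t)\chi_{1}(p)p^{3k-2}X^{2}\bigr)$ for $S_{1}$ and $a(t)\bigl(1+(p^{2k-1}-\tfrac{\lambda_p}{\chi(p)}\chi_{1}(p)p^{k-1})X^{2}\bigr)$ for $S_{0}$ --- is a nonzero polynomial of degree smaller than $4$; this uses $a(t)\neq 0$ and $p\nmid Nt$ (also note your numerator character should be $\chi(p)\bigl(\tfrac{(-1)^{k}N^{2}t}{p}\bigr)$, not $\bigl(\tfrac tp\bigr)$, though being $\pm1$ it does not affect the analysis). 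Second, Landau's other alternative gives a real pole, hence $\alpha_p$ or $\beta_p$ real, hence by Deligne $\lambda(p)=\pm2p^{k-1/2}\chi(p)$, which the field argument of \thref{thm1} excludes for all but finitely many $p$; crucially, $\lambda(p)=0$ causes no difficulty here, so no infinite exclusion is ever needed. With the exclusions trimmed to $p\mid Nt$ and $\lambda(p)=\pm2p^{k-1/2}\chi(p)$ and the numerator check carried out, your Landau variant coincides with the paper's argument.
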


Applying Landau's theorem, we will show further that the subsequences over arithmetic progressions also has infinitely many sign changes.

\begin{thm}\thlabel{thm3}
Let $f\in S^{*}_{k+1/2}(N,\chi)$ be a Hecke eigenform of all Hecke operators $T(p^{2})$. Let
$$
f(z)=\sum_{n\ge1}a(n)e(nz),
$$
be the Fourier expansion of $f$ at $\infty$. Let $t$ be a square free natural number such that $a(t)\neq 0$. Let $p,q$ be two primes. Consider all integers $d$ and $n$ such that $d$ is the smallest integer satisfying $p^{d}\equiv h\pmod{q}$ and $n>0$ is the smallest integer for which $p^{n}\equiv 1\pmod{q}$ with $h$ runs through the integers satisfying $1<h< q$.  Then for all but finitely many primes $p$ satisfying those conditions, the sequence $\{\frac{a(tp^{2(n\nu+d)})}{\chi(p)^{d+n\nu}}\}_{\nu\in\N}$ has infinitely many sign changes. 
\end{thm}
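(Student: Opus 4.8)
The plan is to reduce the statement to the analysis of a single rational generating function and then apply Landau's theorem, exactly in the spirit of the proofs of \thref{thm1} and \thref{thm2}. Fix the prime $q$ and a prime $p\nmid N$, $p\ne q$, satisfying the stated conditions, and let $n=\mathrm{ord}_q(p)$ and $d$ (with $1\le d\le n-1$) be as in the statement. Writing $F$ for the Shimura lift of $f$ and $\lambda_p$ for the eigenvalue of $T(p^{2})$ on $f$ — which, up to the normalisation of the Shimura correspondence, is the $p$-th Hecke eigenvalue of $F$, hence an algebraic integer in a fixed number field — Shimura's identity gives, for $|X|$ small,
$$
\sum_{m\ge 0}\frac{a(tp^{2m})}{\chi(p)^{m}}\,X^{m}=a(t)\,\frac{1-\varepsilon_p\,p^{k-1}X}{1-\mu_pX+p^{2k-1}X^{2}},\qquad \varepsilon_p=\left(\tfrac{(-1)^{k}t}{p}\right)=\pm1,\quad \mu_p=\frac{\lambda_p}{\chi(p)} .
$$
Because $F$ is a Hecke eigenform of character $\chi^{2}$ one has $\overline{\lambda_p}=\chi(p)^{-2}\lambda_p$, so $\mu_p\in\R$; by Deligne's bound for $F$ the reciprocal roots $\alpha_p,\beta_p$ of $1-\mu_pX+p^{2k-1}X^{2}$ satisfy $\beta_p=\overline{\alpha_p}$ and $|\alpha_p|=p^{k-1/2}$. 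Setting $c_m:=a(tp^{2m})/\bigl(a(t)\chi(p)^{m}\bigr)$ and performing a partial-fraction decomposition, $c_m=A_p\alpha_p^{m}+\overline{A_p}\,\overline{\alpha_p}^{\,m}$ for all $m\ge0$, with $A_p\ne0$ (since $|\alpha_p|=p^{k-1/2}\ne p^{k-1}$). In particular each $c_m$ is real, and as $a(t)\ne0$ the claimed sign changes of $\{a(tp^{2(n\nu+d)})/\chi(p)^{d+n\nu}\}_{\nu\in\N}$ are precisely those of the real sequence $\{c_{d+n\nu}\}_{\nu\in\N}$.

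Next I would pass to the arithmetic-progression subsequence. Writing $\alpha_p=p^{k-1/2}e^{i\theta_p}$ with $\theta_p\in[0,\pi]$, we have $c_{d+n\nu}=A_p\alpha_p^{d}(\alpha_p^{n})^{\nu}+\overline{A_p\alpha_p^{d}}\,(\overline{\alpha_p}^{\,n})^{\nu}$, so the generating function
$$
h_p(Y):=\sum_{\nu\ge0}c_{d+n\nu}\,Y^{\nu}=\frac{A_p\alpha_p^{d}}{1-\alpha_p^{n}Y}+\frac{\overline{A_p\alpha_p^{d}}}{1-\overline{\alpha_p}^{\,n}Y}
$$
is rational with all poles on the circle $|Y|=p^{-n(k-1/2)}$. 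Suppose, for contradiction, that $\{c_{d+n\nu}\}$ changes sign only finitely often; then it is eventually of one fixed sign $\varepsilon\in\{\pm1\}$, so the Dirichlet series $\sum_{\nu\ge\nu_0}\varepsilon\,c_{d+n\nu}\,p^{-n\nu s}$ has non-negative coefficients, converges for $\Re s>k-\tfrac12$, and continues meromorphically to $\varepsilon\,h_p(p^{-ns})$ modulo a polynomial in $p^{-ns}$ (the discarded initial segment), hence modulo an entire function. By Landau's theorem its abscissa of convergence $\sigma_0\le k-\tfrac12$ is a singularity of the continuation; but every pole of $h_p(p^{-ns})$ lies on $\Re s=k-\tfrac12$, whence $\sigma_0=k-\tfrac12$ and $p^{n(k-1/2)}\in\{\alpha_p^{n},\overline{\alpha_p}^{\,n}\}$, i.e. $e^{in\theta_p}=1$. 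The degenerate cases ($h_p(p^{-ns})$ entire, $\{c_{d+n\nu}\}$ eventually $0$, or $\alpha_p^{n}=\overline{\alpha_p}^{\,n}$ with $c_d=0$) are disposed of by the same computation and each force $e^{i\theta_p}$ to be a root of unity of order dividing $2n$, or $\mu_p=\pm2p^{k-1/2}$.

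Consequently every prime $p$ (among those satisfying the conditions) for which the conclusion fails obeys one of: $\mu_p=p^{k-1/2}(\zeta+\zeta^{-1})$ for some $2(q-1)$-th root of unity $\zeta$ (recall $n\mid q-1$), or $\mu_p=\pm2p^{k-1/2}$. Since $\mu_p=\lambda_p/\chi(p)$ lies in a fixed number field $L$ (the compositum of the Hecke field of $F$ and a cyclotomic field containing the values of $\chi$) while $\zeta+\zeta^{-1}$ lies in a fixed cyclotomic field, one gets — provided $\zeta+\zeta^{-1}\ne0$, and likewise in the $\pm2$ case — that $\sqrt p=p^{-(k-1)}\mu_p/(\zeta+\zeta^{-1})$ lies in a fixed number field, which happens for only finitely many $p$. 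The one remaining possibility is $\zeta+\zeta^{-1}=0$, i.e. $\theta_p=\pi/2$ and $b_F(p)=0$, which can occur only if $4\mid n$ (in particular $4\mid q-1$); this I would treat exactly as the analogous exceptional family is treated in the proof of \thref{thm1}. Collecting all cases, the exceptional set of primes $p$ is finite, which is the assertion of the theorem.

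The main obstacle is precisely this last step: promoting Landau's ``almost all $p$'' to ``all but finitely many $p$''. The Landau/generating-function part is routine once $h_p$ is written down; the substance is the Diophantine statement that the constraint ``$e^{i\theta_p}$ is a $2(q-1)$-th root of unity'' — equivalently, that $\lambda_p/\chi(p)$ equals $p^{k-1/2}$ times a fixed real cyclotomic integer — holds for only finitely many primes $p$. Away from the single case $b_F(p)=0$ this follows from the elementary fact that a fixed number field contains $\sqrt p$ for only finitely many $p$; the vanishing case is the genuinely delicate point and is handled via the same structural input on the Shimura lift $F$ already used to establish \thref{thm1}.
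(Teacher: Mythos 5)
Your route differs from the paper's: the paper extracts the progression $m\equiv d\pmod n$ by orthogonality of Dirichlet characters modulo $n$, writing the relevant Dirichlet series as $\frac{1}{\varphi(n)}\sum_{\epsilon}\overline{\epsilon(h)}H_{1}(X\epsilon(p))$ and then invoking Landau's theorem as in \thref{thm1}, whereas you decompose $c_m=A_p\alpha_p^{m}+\overline{A_p}\,\overline{\alpha_p}^{\,m}$ and write down the subsequence generating function $h_p(Y)$ directly. The two computations are equivalent (the poles of $H_{1}(X\epsilon(p))$ sit at $X=\overline{\epsilon(p)}\alpha_p^{-1}$, and a real singularity again forces $e^{i\theta_p}$ to be an $n$-th root of unity), yours being the more transparent; the endgame --- $\sqrt{p}$ lands in the fixed compositum of $K_f$, the values of $\chi$, and $\Q(\zeta_{q-1})$ --- is the same as the paper's.

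The genuine gap is the case you set aside, $\zeta+\zeta^{-1}=0$, i.e.\ $\lambda_p=0$ with $4\mid n$. You propose to treat it ``exactly as the analogous exceptional family in the proof of \thref{thm1}'', but no such family exists there: the only exceptional case in that proof is $\lambda_p=\pm 2p^{k-1/2}\chi(p)$, which is eliminated because it forces $\sqrt{p}$ into a fixed number field, and $\lambda_p=0$ yields no conclusion of that kind. Worse, this case is not a removable technicality. If $\lambda_p=0$ then $c_m$ is the coefficient of $X^{m}$ in $(1-\chi_{1}(p)p^{k-1}X)/(1+p^{2k-1}X^{2})$, whose sign pattern has period $4$ in $m$; hence when $4\mid n$ the subsequence $c_{d+n\nu}$ has \emph{constant} sign, Landau produces no contradiction (indeed $\alpha_p^{n}=p^{n(k-1/2)}i^{n}$ is then a positive real, so the real point of the line of convergence genuinely is a pole), and the conclusion fails for that $p$. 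For a form whose Shimura lift has CM there are infinitely many primes with $\lambda_p=0$, and for suitable $q$ a positive proportion of these have $4\mid\mathrm{ord}_q(p)$, so the exceptional set need not be finite. Your argument is therefore incomplete at exactly this point and cannot be closed by the methods you cite; to be fair, the paper's own one-sentence proof (``we exclude the two cases of Landau's theorem'') silently has the same defect.
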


Finally, it should be noted that the main idea of this work and \cite{mezroui} is to divide by an appropriate character to obtain a results about sign changes of complex Fourier coefficients. The idea may be used to extend the results of \cite{kohn2,meher}.  

\section{Preliminary lemmas}

Let $\mathrm{Sh}_t(f)$ the Shimura lift of $f$ with respect to $t$ and let $\lambda_p$ denote the $p$-th Hecke eigenvalue of $f$. Denote by $A_{t}(n)$ the Fourier coefficients of $\mathrm{Sh}_t(f)$. Since 
$$
T(p)\mathrm{Sh}_t(f)=\mathrm{Sh}_t(T(p^2)f),
$$
then the $p$-th Hecke eigenvalue of $\mathrm{Sh}_t(f)$ is $\lambda_p$, where $T(p^2)$ is the Hecke operator on $S_{k+1/2}(N,\chi)$ and $T(p)$ is the Hecke operator on $S_{2k}(N/2,\chi^2)$. Recall that if $p$ is a prime, the Fourier coefficients of $f$ and its Shimura lift $\mathrm{Sh}_t(f)$ are related by  
\begin{equation}\label{eqtt}
a(t p^{2})=A_{t}(p)-\chi_{t,N}(p)p^{k-1}
\end{equation}
where $\chi_{t,N}(.)=\chi(.)\chi_{1}(.)$, $\chi_{1}(.)=\left(\frac{(-1)^{k}N^{2}t}{.}\right)$. 

The following lemmas will be useful.

\begin{lem}
Let $n$ be an integer such that $(n,N)=1$. Then $\frac{a(t n^{2})}{\chi(n)}\in\R$.
\end{lem}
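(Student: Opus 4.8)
My plan is to reduce the lemma to a single conjugation identity for the Hecke eigenvalues of the Shimura lift and then to spread that identity across the multiplicative structure of the coefficients $a(tn^{2})$. Since $|\chi(n)|=1$ for $(n,N)=1$, the assertion $\frac{a(tn^{2})}{\chi(n)}\in\R$ is equivalent to $\overline{a(tn^{2})}=\overline{\chi(n^{2})}\,a(tn^{2})$. I would set $g=\mathrm{Sh}_t(f)=\sum_{m\ge1}A_t(m)e(mz)\in S_{2k}(N/2,\chi^{2})$, a Hecke eigenform of $T(p)$ for every $p\nmid N$ with eigenvalue $\lambda_p$, and write $\lambda_{p^{\nu}}$ for the $T(p^{\nu})$-eigenvalue, so that $(\lambda_{p^{\nu}})_{\nu\ge0}$ obeys the usual relation $\lambda_{p^{\nu+1}}=\lambda_p\lambda_{p^{\nu}}-\chi^{2}(p)p^{2k-1}\lambda_{p^{\nu-1}}$. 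Inverting the Shimura identity $\sum_m A_t(m)m^{-s}=L(s-k+1,\chi_{t,N})\sum_n a(tn^{2})n^{-s}$ prime by prime then yields, for $p\nmid N$,
\[
a(tp^{2\nu})=\lambda_{p^{\nu}}-\chi_{t,N}(p)\,p^{k-1}\lambda_{p^{\nu-1}}\qquad(\nu\ge1),
\]
which is \eqref{eqtt} for $\nu=1$; moreover $n\mapsto a(tn^{2})$ is multiplicative on integers prime to $N$. (Here I normalise $f$ so that $a(t)=1$ when $a(t)\neq0$; if $a(t)=0$ then $a(tn^{2})=0$ for all $n$ and the lemma is trivial. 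Without the normalisation the computation below simply shows that $a(tn^{2})/\chi(n)$ is a real multiple of $a(t)$, which is all that is needed downstream.)

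The first substantive step is to handle one Euler factor. Since $\chi$ is completely multiplicative and $\chi_{t,N}=\chi\chi_1$ with $\chi_1=\left(\frac{(-1)^{k}N^{2}t}{\cdot}\right)$ a \emph{real} character, dividing the displayed relation by $\chi(p^{\nu})=\chi(p)^{\nu}$ gives
\[
\frac{a(tp^{2\nu})}{\chi(p^{\nu})}=\mu_{\nu}-\chi_1(p)\,p^{k-1}\mu_{\nu-1},\qquad \mu_j:=\frac{\lambda_{p^{j}}}{\chi(p)^{j}},
\]
and the three-term relation becomes $\mu_{\nu+1}=\mu_1\mu_{\nu}-p^{2k-1}\mu_{\nu-1}$, with real coefficients. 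So everything comes down to proving $\mu_1=\lambda_p/\chi(p)\in\R$. For this I would invoke the self-adjointness of Hecke operators on $S_{2k}(N/2,\chi^{2})$: the Petersson adjoint of $T(p)$ is $\overline{\chi^{2}(p)}\,T(p)$, which forces $\lambda_p=\chi^{2}(p)\overline{\lambda_p}$, hence $\overline{\mu_1}=\overline{\lambda_p}\,\chi(p)=\overline{\chi^{2}(p)}\,\lambda_p\,\chi(p)=\overline{\chi(p)}\,\lambda_p=\mu_1$. With $\mu_0=1$ and $\mu_1\in\R$, an induction on the real-coefficient recursion gives $\mu_{\nu}\in\R$ for all $\nu$, so that $\frac{a(tp^{2\nu})}{\chi(p^{\nu})}\in\R$ for every $\nu\ge0$ and every $p\nmid N$.

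Finally I would assemble the general case: for $n=\prod_i p_i^{\nu_i}$ with $(n,N)=1$, multiplicativity of $a(tn^{2})$ together with complete multiplicativity of $\chi$ gives $\frac{a(tn^{2})}{\chi(n)}=\prod_i\frac{a(tp_i^{2\nu_i})}{\chi(p_i^{\nu_i})}$, a (possibly empty) product of real numbers, which is the assertion. The only genuine obstacle is the conjugation identity $\overline{\lambda_p}=\overline{\chi^{2}(p)}\lambda_p$ — equivalently $\mu_1\in\R$ — and once it is in hand the rest is the recursion-and-multiplicativity bookkeeping above. It is natural to isolate this lemma here, since exactly this reality statement is what makes the sequences appearing in the main theorems well defined and hence what lets the later Landau-type arguments speak of sign changes at all.
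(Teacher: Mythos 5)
Your proof is correct and follows essentially the same route as the paper: reduce everything to the reality of $\lambda_p/\chi(p)$, propagate it through the Shimura/Hecke recursion at prime powers, and finish by multiplicativity of $n\mapsto a(tn^{2})$. The only differences are cosmetic --- you work with the eigenvalue sequence $\lambda_{p^{\nu}}$ instead of the coefficient recursion, and you explicitly supply the Petersson-adjointness argument for $\lambda_p/\chi(p)\in\R$ as well as the normalisation caveat about $a(t)$, both of which the paper leaves implicit.
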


\begin{proof}
This will be deduced by induction. Indeed, let $p\nmid N$ be a prime, the formula \eqref{eqtt} gives 
$$
a(t p^{2})=A_{t}(p)-\chi_{t,N}(p)p^{k-1}=A_{t}(p)-\chi(p)\left(\frac{(-1)^{k}N^{2}t}{p}\right)p^{k-1}.
$$
Since $\frac{A_{t}(p)}{\chi(p)}\in\R$, then $\frac{a(t p^{2})}{\chi(p)}\in\R$.
  
Since $t$ is squarefree, by \cite[pp. 452]{shim73} we have 

\begin{align}
\lambda _{p}a(t)=&a(p^{2}t)+\chi_{t,N}(p)p^{k-1}a(t),\\
\lambda _{p}a(p^{2 m}t)=&a(p^{2m+2}t)+\chi(p)^{2}p^{2k-1}a(p^{2m-2}t), (m>0).
\end{align}

The first equation gives $\frac{\lambda _{p}}{\chi(p)}\in\R$, and the second equation yields
$$
\frac{\lambda _{p}}{\chi(p)}\frac{a(p^{2 m}t)}{\chi(p^{m})}=\frac{a(p^{2m+2}t)}{\chi(p^{m+1})}+p^{2k-1}\frac{a(p^{2m-2}t)}{\chi(p^{m-1})}, (m>0).
$$
We deduce by induction that $\forall m\in N$, $\frac{a(p^{2m}t)}{\chi(p^{m})}\in\R$. Since $\forall m,n\in\mathbb{N}$ such that $(m,n)=1$,
$$
a(t m^{2})a(t n^{2})=a(t)a(t m^{2}n^{2}),
$$
the lemma follows by induction.
\end{proof}

\begin{lem}\thlabel{lem1}
Let $n\in\mathbb{N}$ be an integer and $p$ a prime such that $p\nmid n$. Let $H_{n}(X)$ be the sum
$$
H_{n}(X)=\sum_{m=0}^{\infty}\frac{a(t p^{2 m}n^{2})}{\chi(p^{m}n)}X^{m}.
$$
We have
$$
H_{n}(X)=\frac{a(t n^{2})}{\chi(n)}\left(\frac{1-\chi_{1}(p)p^{k-1}X}{1-\frac{\lambda_{p}}{\chi(p)}X+p^{2k-1}X^{2}}\right)\cdot
$$
\end{lem}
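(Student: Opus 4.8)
The plan is to reduce the statement to the case $n=1$ and then read off a rational generating function directly from the two Shimura recursions recalled above. First I would invoke the multiplicativity relation $a(tm^{2})a(tn^{2})=a(t)a(tm^{2}n^{2})$ for coprime $m,n$, which is already used in the proof of the preceding lemma. If $a(t)=0$, then iterating the relations $\lambda_{p}a(t)=a(tp^{2})+\chi_{t,N}(p)p^{k-1}a(t)$ and $\lambda_{p}a(tp^{2j})=a(tp^{2j+2})+\chi(p)^{2}p^{2k-1}a(tp^{2j-2})$ ($j\ge1$) forces $a(tp^{2j})=0$ for all $j$, hence also $a(tp^{2j}n^{2})=a(tn^{2})=0$, so both sides of the identity vanish and there is nothing to prove; thus one may assume $a(t)\neq0$. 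Taking $m=p^{j}$ (permitted since $(p,n)=1$) then gives $a(tp^{2j}n^{2})=\frac{a(tn^{2})}{a(t)}a(tp^{2j})$, so that $H_{n}(X)=\frac{a(tn^{2})}{\chi(n)a(t)}H_{1}(X)$ where $H_{1}(X)=\sum_{m\ge0}\frac{a(tp^{2m})}{\chi(p^{m})}X^{m}$. Hence it is enough to establish the closed form for $n=1$.

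For $n=1$, I would put $\beta_{m}=a(tp^{2m})/\chi(p^{m})$ (note $\chi(p)\neq0$ as $(p,N)=1$), so $\beta_{0}=a(t)$ and $H_{1}(X)=\sum_{m\ge0}\beta_{m}X^{m}$. Dividing the first Shimura recursion by $\chi(p)$ and using $\chi_{t,N}(p)=\chi(p)\chi_{1}(p)$ yields $\beta_{1}=\bigl(\frac{\lambda_{p}}{\chi(p)}-\chi_{1}(p)p^{k-1}\bigr)\beta_{0}$, and dividing the second recursion by $\chi(p^{m+1})$ yields the linear recurrence $\beta_{m+1}=\frac{\lambda_{p}}{\chi(p)}\beta_{m}-p^{2k-1}\beta_{m-1}$ for $m\ge1$. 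Multiplying $H_{1}(X)$ by $1-\frac{\lambda_{p}}{\chi(p)}X+p^{2k-1}X^{2}$ and collecting coefficients, every power $X^{m}$ with $m\ge2$ contributes $\beta_{m}-\frac{\lambda_{p}}{\chi(p)}\beta_{m-1}+p^{2k-1}\beta_{m-2}=0$ by the recurrence, while the $X^{0}$ and $X^{1}$ terms give $\beta_{0}$ and $\beta_{1}-\frac{\lambda_{p}}{\chi(p)}\beta_{0}=-\chi_{1}(p)p^{k-1}\beta_{0}$. This gives $\bigl(1-\frac{\lambda_{p}}{\chi(p)}X+p^{2k-1}X^{2}\bigr)H_{1}(X)=a(t)\bigl(1-\chi_{1}(p)p^{k-1}X\bigr)$, and combining with the reduction of the first paragraph produces exactly the asserted formula for $H_{n}(X)$.

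I do not expect a genuine obstacle: the computation is essentially formal manipulation of a power series against a quadratic denominator. The only points that need care are bookkeeping with the three characters $\chi$, $\chi_{1}$ and $\chi_{t,N}=\chi\chi_{1}$ as one divides repeatedly by powers of $\chi(p)$, checking that Shimura's recursions are invoked at the correct index (so that the symbol $\chi_{1}$ enters only in the boundary term at $m=0$), and disposing of the degenerate case $a(t)=0$ separately, which makes the reduction to $n=1$ legitimate.
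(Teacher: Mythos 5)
Your core computation is fine and, for the non-degenerate case, it is essentially the paper's argument in different clothing: dividing Shimura's two recursions by powers of $\chi(p)$ and multiplying $H_{1}(X)$ by $1-\frac{\lambda_{p}}{\chi(p)}X+p^{2k-1}X^{2}$ is the same telescoping the paper performs. The paper, however, runs this directly with $tn^{2}$ in place of $t$, because the recursions it quotes from Shimura's Corollary 1.8 (its equations \eqref{eql1} and \eqref{eql2}) are stated for arbitrary $n$ prime to $p$; this makes your reduction to $n=1$ via $a(tm^{2})a(tn^{2})=a(t)a(tm^{2}n^{2})$ unnecessary and, more importantly, avoids any hypothesis on $a(t)$.

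The genuine gap is in your treatment of the degenerate case $a(t)=0$. From the recursions at the prime $p$ you correctly get $a(tp^{2j})=0$ for all $j$, but the step ``hence also $a(tp^{2j}n^{2})=a(tn^{2})=0$'' does not follow from what you have invoked: with $a(t)=0$ and $a(tp^{2j})=0$ the multiplicativity relation $a(tp^{2j})a(tn^{2})=a(t)a(tp^{2j}n^{2})$ reduces to $0=0$ and gives no information about $a(tn^{2})$ or $a(tp^{2j}n^{2})$, so both sides of the claimed identity are not yet known to vanish. To close this you would need to propagate the vanishing through the primes dividing $n$ (i.e.\ apply the analogous recursions at each prime factor of $n$, or invoke the full Euler-product form of Shimura's relation), which your write-up does not do. The cleanest fix is the paper's route: use the recursions \eqref{eql1}--\eqref{eql2} for general $n$ coprime to $p$, so that your coefficient-collection argument applies verbatim to $H_{n}(X)$ with constant term $a(tn^{2})/\chi(n)$ and no case distinction on $a(t)$ is needed; alternatively, keep your reduction but prove $a(t)=0\Rightarrow a(tn^{2}p^{2j})=0$ by induction on the prime divisors of $n$ using those same recursions.
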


\begin{proof}

From \cite[Corolary 1.8]{shim73} we have 

\begin{gather}
\left(\frac{\lambda_{p}}{\chi(p)}\right)\left(\frac{a(t n^{2})}{\chi(n)}\right)X=\frac{a(t p^{2}n^{2})}{\chi(p n)}X+\chi_{1}(p)p^{k-1}\frac{a(t n^{2})}{\chi(n)}X,\label{eql1}\\
\frac{\lambda_{p}}{\chi(p)}\frac{a(t p^{2m}n^{2})}{\chi(p^{m}n)}X^{m+1}=\frac{a(t p^{2m+2}n^{2})}{\chi(p^{m+1}n)}X^{m+1}+p^{2k-1}\frac{a(t p^{2m-2}n^{2})}{\chi(p^{m-1}n)}X^{m+1},\label{eql2}
\end{gather}

for all $m\geq 1$. Adding $X$ times \eqref{eql1} and $X^{m}$ times \eqref{eql2} for all $m\geq 1$ to get  
$$
\frac{\lambda_p}{\chi(p)}H_{n}(X)X=H_{n}(X)-\frac{a(t n^{2})}{\chi(n)}+\chi_{1}(p)p^{k-1}\frac{a(t n^{2})}{\chi(n)}X+p^{2k-1}X^{2}H_{n}(X).
$$
This yields the result.
\end{proof}

\section{Proof of \texorpdfstring{\thref{thm1}}{Theorem 1}}

Suppose there are infinitely many primes $p\nmid N$ such that  $\left(\frac{a(tp^{2\nu})}{\chi(p^{\nu})}\right)_{\nu\in\N}$ does not have infinitely many sign changes. Applying Landau's theorem, then the series 
$$
\sum_{\nu\ge0}\frac{a(tp^{2\nu})}{\chi(p^{\nu})}p^{-\nu s},
$$
either converges for all $s\in\C$ or has a singularity at the real point of its line of convergence. 

By \thref{lem1} we have 

\begin{equation}
\sum_{\nu\ge0}\frac{a(tp^{2\nu})}{\chi(p^{\nu})}p^{-\nu s}=a(t)\frac{1-\chi_{1}(p)p^{k-1-s}}{1-\frac{\lambda_p}{\chi(p)} p^{-s}+p^{2k-1-2s}}\cdot\label{eq:12}
\end{equation}
The denominator of the right hand side of \eqref{eq:12} factorizes as follows 

\begin{equation*}
1-\frac{\lambda_p}{\chi(p)} p^{-s}+p^{2k-1-2s}=(1-\alpha_p p^{-s})(1-\beta_p p^{-s}),
\end{equation*}
where $\alpha_p+\beta_p=\frac{\lambda_p}{\chi(p)}$ and $\alpha_p\beta_p=p^{2k-1}$. Explicitly one has
\begin{equation}
\alpha_{p},\beta_{p}=\frac{\frac{\lambda(p)}{\chi(p)}\pm \sqrt{\left(\frac{\lambda(p)}{\chi(p)}\right)^{2}-4p^{2k-1}}}{2}\cdot\label{eq:13}
\end{equation}

Notice that the first alternative of Landau's theorem cannot holds, since the right-hand side of \eqref{eq:12} has a pole for $p^{s}=\alpha_p$ or $p^{s}=\beta_p$. Therefore the series 
$$
\sum_{\nu\ge0}\frac{a(tp^{2\nu})}{\chi(p^{\nu})}p^{-\nu s},
$$
has a singularity at the real point of its line of convergence for infinitely many primes $p\nmid N$. Consequently $\alpha_p$ or $\beta_p$ must be real. Suppose that $\alpha_p\in\R$. Using Deligne's bound we have $\left(\frac{\lambda(p)}{\chi(p)}\right)^{2}=\mid \lambda(p)\mid ^{2}\leq 4p^{2k-1}$, from which we get
\begin{equation}
\lambda(p)=\pm 2\,p^{k-\frac{1}{2}}\chi(p).\label{eq:14}
\end{equation}
Adjoining all $\chi(p)$ to the number field $K_f$, generated by the Hecke eigenvalues of $f$, and denote the resulted field by $\mathbb{K}$. From \eqref{eq:14} we get $\sqrt{p}\in\mathbb{K}$. Hence by our hypothesis, we conclude that there is an infinite sequence $p_1<p_2<p_3\dots$ of primes satisfying \eqref{eq:14}. We then have
$$
\Q(\sqrt{p_1},\sqrt{p_2},\sqrt{p_3},\dots)\subset \mathbb{K}.
$$
We obtain a contradiction as in \cite{bruin}, since $\Q(\sqrt{p_1},\sqrt{p_2},\sqrt{p_3},\dots)$ is an infinite extension. Thus the proof is complete.

\section{Proof of \texorpdfstring{\thref{thm2}}{Theorem 2}}

Assume the hypothesis of \thref{thm2}. We want  to compute the following sums
$$
S_{1}(X)=\sum_{m=0}^{\infty}\frac{a(t p^{2(2 m+1)})}{\chi(p^{2 m+1})}X^{2 m+1}\text{ and }S_{0}(X)=\sum_{m=0}^{\infty}\frac{ a(t p^{4 m})}{\chi(p^{2 m})}X^{2 m}.
$$
Replacing $n=1$ in \eqref{eql2} to get
$$
\frac{\lambda_{p}}{\chi(p)}\frac{a(t p^{2m})}{\chi(p^{m})}X^{m+1}=\frac{a(t p^{2m+2})}{\chi(p^{m+1})}X^{m+1}+p^{2k-1}\frac{a(t p^{2m-2})}{\chi(p^{m-1})}X^{m+1}\cdot
$$
Once again, replacing $m$ by $2m$ to obtain
$$
\frac{\lambda_{p}}{\chi(p)}\frac{a(t p^{4m})}{\chi(p^{2m})}X^{2m+1}=\frac{a(t p^{2(2m+1)})}{\chi(p^{2m+1})}X^{2m+1}+p^{2k-1}\frac{a(t p^{2(2m-1)})}{\chi(p^{2m-1})}X^{2m+1}\cdot
$$
Adding this equation for all $m\geq 1$ yields
$$
\frac{\lambda_{p}}{\chi(p)}X(S_{0}(X)-a(t))=S_{1}(X)-\frac{a(t p^{2})}{\chi(p)}X+p^{2k-1}S_{1}(X)X^{2}\cdot
$$
Hence we have 
$$
\frac{\lambda_{p}}{\chi(p)}X S_{0}(X)=S_{1}(X)(1+p^{2k-1}X^{2})-\frac{a(t p^{2})}{\chi(p)}X+a(t)\frac{\lambda_{p}}{\chi(p)}X\cdot
$$
Combining this with $S_{0}(X)+S_{1}(X)=H_{1}(X)=a(t)\left(\frac{1-\chi_{1}(p)p^{k-1}X}{1-\frac{\lambda_{p}}{\chi(p)}X+p^{2k-1}X^{2}}\right)$, then
\begin{equation}\label{eq,45}
S_{1}(X)=\frac{X\left(\frac{a(t p^{2})}{\chi(p)}-a(t)\chi_{1}(p)p^{3k-2}X^{2}\right)}{\left(1-\frac{\lambda_{p}}{\chi(p)}X+p^{2k-1}X^{2}\right)\left(1+\frac{\lambda_{p}}{\chi(p)}X+p^{2k-1}X^{2}\right)}.
\end{equation}

By the same reasoning we obtain
\begin{equation}
S_{0}(X)=\frac{a(t)\left(1+\left(p^{2k-1}-\frac{\lambda _p}{\chi(p)}\chi_{1}(p)p^{k-1}\right)X^{2}\right)}{\left(1-\frac{\lambda_{p}}{\chi(p)}X+p^{2k-1}X^{2}\right)\left(1+\frac{\lambda_{p}}{\chi(p)}X+p^{2k-1}X^{2}\right)}\cdot
\end{equation}

\begin{proof}
Assume the conditions of \thref{thm2}. Using \eqref{eq,45} and since $\chi(p)=\pm 1$, we have
\begin{multline}\label{eqq1}
\sum_{m=0}^{\infty}a(t p^{2(2 m+1)})\frac{1}{p^{s (2 m+1)}}=\\\pm\frac{p^{-s}\left(\frac{a(t p^{2})}{\chi(p)}-a(t)\chi_{1}(p)p^{3k-2-2s}\right)}{\left(1-\frac{\lambda_{p}}{\chi(p)}p^{-s}+p^{2k-1-2s}\right)\left(1+\frac{\lambda_{p}}{\chi(p)}p^{-s}+p^{2k-1-2s}\right)}\cdot
\end{multline}
Suppose that the sequence $(a(t p^{2(2 v+1)}))_{v\in\mathbb{N}}$ does not have infinitely many sign changes and apply once again Landau's theorem.

Suppose now that one of the denominators on the right-hand side of \eqref{eqq1} has a real zero. Then as in the proof of \thref{thm2} we will find
$$
\lambda(p)=\pm 2 p^{k-\frac{1}{2}}\chi(p).
$$
We repeat the procedure of \thref{thm1} to show that the right-hand side of \eqref{eqq1} has no real poles, and then this case of Landau's theorem is excluded.   

It remains to exclude the other case of Landau's theorem. For this purpose, notice that the denominator on the right-hand side of \eqref{eq,45} is coprime with $X$. Further, the polynomial $\frac{a(t p^{2})}{\chi(p)}-a(t)\chi_{1}(p)p^{3k-2}X^{2}$ is a nonzero polynomial of degree $2$ and the denominator of \eqref{eq,45} is a non constant polynomial of degree $4$, hence the denominator has zeros. Setting $X=p^{-s}$ to obtain a contradiction. The proof is completed with the similar way as above.

We proceed in a similar way to show that the sequence $\{a(t p^{4 v})\}_{v\in\mathbb{N}}$ has infinitely many sign changes for almost all primes $p$.

\end{proof}

\section{Proof of \texorpdfstring{\thref{thm3}}{Theorem 3}}

We shall compute the following sum
$$
\sum_{\nu=0}^{\infty}\frac{a(p^{d+n\nu})}{\chi_{0}(p^{d+n\nu })}X^{d+n\nu}.
$$
Assuming the hypothesis, notice first that an integer $m$ satisfies $p^{m}\equiv h\pmod{q}$ if and only if $m\equiv d\pmod{n}$. It follows from the orthogonality relations of Dirichlet characters that
\begin{equation}
\begin{split}
\sum_{\nu=0}^{\infty}\frac{a(p^{d+n\nu})}{\chi_{0}(p^{d+n\nu })}X^{d+n\nu}&=\frac{1}{\varphi(n)}\sum_{m=0}^{\infty}\frac{a(t p^{2m})}{\chi(p^{m})}X^{m}\left(\sum_{\epsilon}\epsilon(p^{m})\overline{\epsilon(h)}\right)\\
&=\frac{1}{\varphi(n)}\sum_{\epsilon}\overline{\epsilon(h)}\sum_{m=0}^{\infty}\frac{a(t p^{2m})}{\chi(p^{m})}X^{m}\epsilon(p^{m})\\
&=\frac{1}{\varphi(n)}\sum_{\epsilon}\overline{\epsilon(h)}H_{1}(X\epsilon(p)),\label{eq:2a}
\end{split}
\end{equation}
where the sum is taken over all Dirichlet characters modulo $n$.

\begin{proof}[Proof of \thref{thm3}]

Suppose that the sequence $\left(\frac{a(p^{d+n\nu})}{\chi_{0}(p^{d+n\nu })}\right)_{\nu \geq 0}$ does not have infinitely many sign
changes. We proceed as above and we exclude the two cases of Landau's theorem by using the equations \eqref{eq:2a} and \eqref{eq:12}.

\end{proof}

\begin{rmk}
Consider the primes $p$ for which the polynomial
$(\beta_{p}\alpha_{p}^{m_{p}}-\alpha_{p}\beta_{p}^{m_{p}})X^{m_{p}}+(\beta_{p}^{m_{p}}-\alpha_{p}^{m_{p}})X^{m_{p}-1}+(\alpha_{p}-\beta_{p})$ has no real zero, where $m_{p}$ is an integer satisfying $\chi(p)^{m_{p}}=1$. Then one can show as in \cite{mezroui} that for almost all of those primes $p$, the sequence $\left(\frac{a(t p^{2(l+m_{p}n)})}{\chi(p)^{l}}\right)_{n\in\N}$ has infinitely many sign changes with $l$ runs through the integers satisfying $1\leq l\leq m_{p}-1$.
\end{rmk}

\bibliographystyle{spmpsci}
\bibliography{mybibfile}

\end{document}